\documentclass[10pt]{article}

\usepackage[english]{babel}
\usepackage[utf8x]{inputenc}
\usepackage[T1]{fontenc}

\usepackage[letterpaper,top=1in,bottom=1in,left=1in,right=1in,marginparwidth=1.75cm]{geometry}

\usepackage{amsmath,amsfonts,amsthm,amssymb,mathrsfs,dsfont} 
\usepackage{mathtools}
\usepackage{graphicx}
\usepackage[colorinlistoftodos]{todonotes}
\usepackage[colorlinks=true, allcolors=blue]{hyperref}
\usepackage[capitalize,nameinlink]{cleveref}
\usepackage{verbatim}
\usepackage{enumerate}
\usepackage{slantsc}

\newcommand{\R}{\mathbb{R}}

\newcommand{\CR}{\textsc{cr}}
\newcommand{\LCR}{\textsc{lcr}}
\newcommand{\I}{\mathcal{I}}
\newcommand{\g}{\mathcal{G}}

\global\long\def\R{\mathbb{R}}

\global\long\def\N{\mathbb{N}}

\global\long\def\E{\mathbb{E}}

\global\long\def\Pr{\textnormal{Pr}}

\newtheorem{theorem}{Theorem}[section]
\newtheorem*{namedtheorem}{\theoremname}
\newcommand{\theoremname}{testing}

\newtheorem{thm}[theorem]{Theorem}
\newtheorem{lemma}[theorem]{Lemma}

\newtheorem{prop}[theorem]{Proposition}

\newtheorem{cor}[theorem]{Corollary}

\newtheorem*{question*}{Question}

\theoremstyle{definition}
\newtheorem{definition}[theorem]{Definition}

\newtheorem{remark}[theorem]{Remark}
\newtheorem{rmk}[theorem]{Remark}

\theoremstyle{plain}

\title{On the $k$-planar local crossing number}
\author{John~Asplund \thanks{Dalton State College, Department of Technology and Mathematics,
jasplund@daltonstate.edu} \and Thao~Do \thanks{Massachusetts Institute of Technology, Department of Mathematics, thaodo@mit.edu} \and Arran~Hamm \thanks{Winthrop University, Department of Mathematics, hamma@winthrop.edu} \and 	Vishesh Jain \thanks{Massachusetts Institute of Technology, Department of Mathematics, visheshj@mit.edu}}
\usepackage{algpseudocode,algorithm,algorithmicx}
\date{}

\begin{document}
\maketitle

\begin{abstract}
Given a fixed positive integer $k$, the $k$-planar local crossing number of a graph $G$, denoted by $\LCR_k(G)$, is the minimum positive integer $L$ such that $G$ can be decomposed into $k$ subgraphs, each of which can be drawn in a plane such that no edge is crossed more than $L$ times. 
In this note, we show that under certain natural restrictions, the ratio $\LCR_k(G)/\LCR_1(G)$ 
is of order $1/k^2$, which is analogous to the result of Pach et al. \cite{PACH-k-planar} for the $k$-planar crossing number $\CR_k(G)$ (defined as the minimum positive integer $C$ for which there is a $k$-planar drawing of $G$ with $C$ total edge crossings). As a corollary of our proof we show that, under similar restrictions, one may obtain a $k$-planar drawing of $G$ with \emph{both} the total number of edge crossings as well as the maximum number of times any edge is crossed essentially matching the best known bounds. 
Our proof relies on the crossing number inequality and several probabilistic tools such as concentration of measure and the Lov\'asz local lemma. 
\end{abstract}

\section{Introduction}

A \emph{drawing} of a graph $G$ is a mapping, in which every vertex of $G$ is mapped to a distinct point in the plane, and every edge into a continuous curve connecting the images of
its endpoints. 
As is standard (see, e.g., \cite{PACH-k-planar}), we will assume that (1) no curve contains the image of any vertex other than its endpoints, (2) no two curves share infinitely many points, (3) no two curves are tangent to each other, and (4) no three curves pass through the same point. 
A \emph{crossing} in such a drawing is
a point where the images of two edges intersect, and the \emph{crossing number} of a graph $G$, denoted by $\CR(G)$, is the
smallest number of crossings achievable by any drawing of $G$ in the plane. 

The study of crossing numbers dates back to Paul Tur\'an's Brick Factory Problem \cite{turan1977note}. 
While working in a forced labor camp during the Second World War, Tur\'an wondered how to design an `efficient' rail system from the `kilns' to the `storage yards', where each kiln was to be connected by a direct track to each storage yard; the objective was to minimize the number of crossings, where the cars tended to fall off the tracks, requiring workers to reload the bricks onto the car. In the terminology introduced in the previous paragraph, this is precisely the problem of finding a drawing of the complete bipartite graph attaining its crossing number. Over the years, the crossing number has emerged as a central object of interest in discrete mathematics. We refer the reader to the recent book of Schaefer \cite{schaefer2018crossing} for a modern and thorough account of this area.   

The study of drawings of graphs with additional `local' restrictions on crossings has also attracted considerable attention in recent decades. As a natural relaxation of the standard notion of planarity, Ringel \cite{ringel1965sechsfarbenproblem} defined a graph to be \emph{1-planar} if it admits a drawing with at most one crossing on each edge.
Similarly, one can define \emph{k-planar} graphs for all integers $k \geq 1$ (we caution the reader that the notion of `$k$-planarity' in the previous sentence is completely different from the $k$-planar local crossing number that we will introduce later). Ringel was interested in a generalization of the $4$-color theorem to $1$-planar graphs, which would imply results for the problem of simultaneously coloring vertices and faces of planar graphs. $k$-planarity has emerged as one of the most widely studied generalizations of planarity, and has found applications in graph theory, graph algorithms, graph drawing and computational geometry (see, e.g., the annotated bibliography \cite{kobourov2017annotated}).
On the other hand, for applications of a similar nature to the one Tur\'an was interested in, it is more convenient to turn the above definition around, and define the \emph{local crossing number} of a graph $G$, denoted by $\LCR(G)$, to be the minimum $k$ for which the graph is $k$-planar. 
In other words, $\LCR(G)$ is the smallest integer $L$ for which there exists a drawing of $G$ such that there are at most $L$ crossings along any edge.     

Motivated by applications to the design of printed and integrated circuits, Owens \cite{owens1971biplanar} defined the \emph{biplanar crossing number}, denoted by $\CR_2(G)$, to be the minimum sum of the crossing numbers of two graphs $G_0$ and $G_1$  (on the same vertex set as $G$), whose union is $G$. 
This was extended by Shahrokhi et al. \cite{shahrokhi2007k} to \emph{$k$-planar crossing numbers}, denoted by $\CR_k(G)$, for all integers $k\geq 1$ in the natural way: for any graph $G$, $\CR_k(G)$ is the minimum of $\CR(G_1) + \CR(G_2) + \dots + \CR(G_{k})$, where the minimum is taken over all graphs $G_1, G_2,\dots G_{k}$ on the same vertex set with $G$ such that $\cup_{i=1}^{k}E(G_i) = E(G)$. 
We remark that while the preceding definitions are natural extensions of the definition of the crossing number, $\CR_k(G)$ for $k \geq 2$ seems to behave quite differently from $\CR(G)$; for instance, it is well known that testing whether $\CR(G)=0$ can be done in linear time (\cite{boyer2004cutting}), whereas testing whether $\CR_2(G)=0$ is already NP-complete (\cite{cabello2013adding}). 
For a detailed introduction to $k$-planar crossing numbers, we refer the reader to survey papers \cite{czabarka2008biplanar-survey-II} and \cite{czabarka2006biplanar-survey-I}. 

Recently, several researchers have investigated the relationship between $\CR(G)$ and $\CR_k(G)$. Czabarka, S{\`y}kora, Sz{\'e}kely, and Vrt'o 
\cite{czabarka2008biplanar-survey-II} proved that for every graph $G$, 
$$\CR_2(G)\leq \frac{3}{8}\CR(G).$$
They also showed that this inequality does not remain true if the constant $3/8$ is replaced by anything smaller than $8/119$. This result was refined and extended to $\CR_k(G)$ for all $k\geq 1$ by Pach, Sz\'ekely, T\'oth, and T\'oth \cite{PACH-k-planar}. 

\begin{thm}\label{Pach-k-planar-thm}(Pach, Sz\'ekely, T\'oth, and T\'oth \cite{PACH-k-planar})
For every integer $k\geq 1$, 
$$\frac{1}{k^2}\leq \sup \frac{\CR_k(G)}{\CR(G)}\leq \frac{2}{k^2}-\frac{1}{k^3},$$
where the supremum is taken over all \emph{nonplanar} graphs $G$.
\end{thm}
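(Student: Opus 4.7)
My plan is to prove the two inequalities separately.

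\textbf{Lower bound.} I would consider a family of dense graphs $\{G_n\}$ for which the crossing number inequality $\CR(H) \geq c_\star \cdot e(H)^3/n(H)^2$ is asymptotically tight (for instance, $K_n$ or appropriate dense pseudorandom graphs). Given any $k$-decomposition $G_n = G_1 \cup \cdots \cup G_k$ with $e_i := |E(G_i)|$ summing to $e(G_n)$, the crossing number inequality bounds each $\CR(G_i) \geq c_\star e_i^3/n(G_n)^2$ (valid for $e_i \geq 4 n(G_n)$, which holds for all $i$ by averaging when $n \gg k$). Summing and applying Jensen's inequality to $x \mapsto x^3$ gives $\sum_i e_i^3 \geq k (e(G_n)/k)^3$, and hence $\CR_k(G_n) \geq c_\star \cdot e(G_n)^3/(k^2 n(G_n)^2)$. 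Dividing by $\CR(G_n) = \Theta(e(G_n)^3/n(G_n)^2)$ and passing to the limit produces $\sup \CR_k(G)/\CR(G) \geq 1/k^2$, with the exact leading coefficient $1$ recovered from the asymptotic tightness of the crossing lemma on the chosen family.

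\textbf{Upper bound.} I would fix an optimal drawing $D$ of $G$ realizing $\CR(G)$ crossings and color each edge independently with a uniformly random color from $[k]$. Letting $G_i$ denote the color-$i$ subgraph and $X_i$ the number of crossings in the sub-drawing of $D$ restricted to $G_i$, one has $\CR(G_i) \leq X_i$, and since a crossing of $D$ is monochromatic precisely when its two edges receive the same color (an event of probability $1/k$), $\mathbb{E}\bigl[\sum_i X_i\bigr] = \CR(G)/k$. The probabilistic method then yields the crude bound $\CR_k(G) \leq \CR(G)/k$.

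\textbf{Main obstacle.} Sharpening this crude $1/k$ bound to $(2k-1)/k^3 = 2/k^2 - 1/k^3$ is the heart of the proof and requires a further factor $\approx k/2$ improvement. I would attempt this via an alteration/re-routing step: for each monochromatic crossing in color $i$, try to re-route one of the two involved edges through a region of the plane vacated by edges of other colors, producing a drawing of $G_i$ with strictly fewer crossings than $X_i$. A careful accounting should show that in expectation each crossing contributes to the final $\CR_k$ count with probability only $(2k-1)/k^3$ instead of $1/k$. An alternative route is to replace the independent uniform coloring by a \emph{correlated} coloring with uniform marginals but weakly anti-correlated on pairs of crossing edges, engineered so the monochromaticity probability is directly reduced to $(2k-1)/k^3$; existence of such a coloring could be argued via the Lov\'asz local lemma or an explicit construction exploiting the planar structure of $D$.
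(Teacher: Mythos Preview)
The paper does not itself prove this theorem; it is quoted from \cite{PACH-k-planar}. However, the upper-bound construction from that reference is described and reused verbatim in the proof of \cref{thm 4/9} (and its expectation is quoted in the proof of \cref{combine cr and lcr}), so one can compare your proposal against that.

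\textbf{Upper bound: genuine gap.} Your independent edge-coloring gives only $1/k$, and neither of your suggested repairs (re-routing through vacated regions, or an anti-correlated edge coloring) is how the bound is obtained. The construction in \cite{PACH-k-planar}, as reproduced in this paper, colors \emph{vertices}, not edges: assign each vertex an independent uniform label in $[k]$, call the \emph{type} of an edge the unordered pair of labels of its endpoints, and put all edges of type $\{i,j\}$ into the plane indexed by $i+j \pmod k$. Within each plane the connected components consist of edges of a single type and can be translated apart, so a crossing of the original optimal drawing survives only when its two edges receive the \emph{same} type. For two edges with four distinct endpoints this happens with probability
\[
\sum_{i=1}^{k}\frac{1}{k^4}\;+\;\sum_{i<j}\frac{4}{k^4}\;=\;\frac{1}{k^3}+\frac{2(k-1)}{k^3}\;=\;\frac{2}{k^2}-\frac{1}{k^3},
\]
which yields the upper bound directly by linearity of expectation. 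The extra factor of roughly $2/k$ you were looking for comes precisely from the fact that ``same type'' is a four-vertex coincidence rather than a two-edge one; no alteration or local-lemma step is needed.

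\textbf{Lower bound: right shape, but a hidden gap.} Your convexity argument is essentially the correct skeleton, but the phrase ``asymptotic tightness of the crossing lemma on the chosen family'' conceals the real issue. The crossing number inequality with any proven constant (e.g.\ $1/29$) is \emph{not} known to be tight on $K_n$ or on any explicit family, so your computation as written only yields $c/k^2$ for some $c<1$. What makes the constant exactly $1$ is the existence of the so-called midrange crossing constant: there is a fixed $\kappa>0$ such that for $n\ll m\ll n^2$ the minimum of $\CR(G)$ over all $n$-vertex, $m$-edge graphs is $(\kappa+o(1))m^3/n^2$. Taking $G$ to realize this minimum and applying the \emph{same} $\kappa$ as a lower bound on each $\CR(G_i)$ makes the constants cancel and leaves $1/k^2$. (The side remark that ``$e_i\ge 4n$ holds for all $i$ by averaging'' is also incorrect---averaging controls only one part---but the sparse parts contribute a lower-order term and are easily absorbed.)
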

 
Having in mind applications where any edge being crossed too many times constitutes a `bottleneck', we introduce in this paper the following notion of \emph{$k$-planar local crossing number}.
\begin{definition}
Let $k$ be a positive integer. For any graph $G$,  its \emph{$k$-planar local crossing number}, denoted by $\LCR_k(G)$, is the minimum of $\max\{\LCR(G_1),\LCR(G_2),\dots , \LCR(G_{k})\}$, where the minimum is taken over all graphs $G_1, G_2,\dots G_{k}$ such that $\cup_{i=1}^{k}E(G_i) = E(G)$.
\end{definition}

Our main results have a similar flavor as \cref{Pach-k-planar-thm}, and relate the local crossing number of a graph to its $k$-planar variant. Before stating them, we need to introduce some notation. For any graph $G$, let $e(G)$ denote the number of edges and let $v(G)$ denote the number of vertices. For each $\alpha>0$ and $\beta>0$, let $\g_{\alpha, \beta}$ denote the set of all graphs for which the maximum degree $\Delta(G)$ is no more than $\alpha$ times the average degree (i.e. $\Delta (G)\leq  2\alpha e(G)/v(G)$) and its local crossing number is at least $\beta$.

\begin{thm}\label{thm 4/9}
Fix $k \in \N$. For any $0<\varepsilon<1/10$,  
$$\sup_{G\in \g_{\alpha,\beta}}\frac{\LCR_k(G)}{\LCR(G)}\leq \frac{2}{k^2}+ \varepsilon,$$
provided $\beta \geq 1000\alpha^{2}\varepsilon^{-4}\left(\log{\alpha}+\log(1/\varepsilon)\right)^{2}$. 
Moreover, for $k=2$, the term $2/k^{2} = 1/2$ can be replaced by $4/9$.

On the other hand, even for the family of complete graphs $K_n$ (which, for all sufficiently large $n$, are in $\g_{\alpha,\beta}$ for any $\alpha \geq 1$ and $\beta \leq {n^2}/75$), we have
$$ \liminf_{n\to \infty}\frac{\LCR_k(K_n)}{\LCR(K_n)} \geq \frac{9}{58k^2}.$$ 
\end{thm}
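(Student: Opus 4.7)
The plan is to handle the upper and lower bounds by separate arguments. For the upper bound I will randomly partition the edges of $G$ into $k$ color classes and then redraw each class so that its maximum crossings per edge matches the crossing number inequality lower bound; for the lower bound a pigeonhole argument combined with the crossing number inequality gives $9/(58k^2)$ directly.

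\textbf{Upper bound.} Set $L=\LCR(G)$ and fix a drawing of $G$ with at most $L$ crossings on every edge (so $\CR(G)\leq Le(G)/2$). Color each edge of $G$ independently and uniformly with a color in $[k]$ to obtain a partition $G=G_1\sqcup\dots\sqcup G_k$. Two facts drive the proof: (i) Chernoff bounds show that, w.h.p., each $G_i$ has $e(G_i)\approx e(G)/k$ and $\Delta(G_i)\approx \Delta(G)/k$, so $G_i$ inherits the near-regular property of $G$; and (ii) the crossing number inequality combined with $\CR(G)\leq Le(G)/2$ forces the average degree of $G$ to be of order $\sqrt{L}$, which makes $G$ (and hence $G_i$) tight to the crossing number inequality up to constants. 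The key step is then to exhibit, for each $G_i$, a drawing whose maximum crossings per edge is at most $2\CR(G_i)/e(G_i)$ times a factor of $1+o(1)$; since by tightness $\CR(G_i)\lesssim e(G_i)^3/v(G_i)^2\lesssim Le(G)/(2k^2)$, this yields $\LCR(G_i)\leq (2/k^2+\varepsilon)L$. The bad events ``some edge in $G_i$ is crossed more than $(2/k^2+\varepsilon)L$ times in its redrawn color class'' depend only on the colors of edges in a bounded neighborhood of the edge, so the Lov\'asz local lemma allows one to find a color partition on which every bad event is avoided. The quantitative requirement $\beta\gtrsim \alpha^2\varepsilon^{-4}(\log\alpha+\log(1/\varepsilon))^2$ is exactly what makes the Chernoff tails and the LLL slack absorbable into $\varepsilon L$.

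\textbf{The $k=2$ refinement.} To improve $1/2$ to $4/9$ when $k=2$, I would sharpen the above by replacing the uniform random $2$-coloring with a biased one (each edge colored $1$ with probability $p$ and $2$ with probability $1-p$) and optimizing $p$ against a sharper analysis of the induced crossing number inequality constants in each class; this parallels the $k=2$ improvement in Pach et al.\ \cite{PACH-k-planar}, and the value $4/9$ reflects that best choice of $p$ in the resulting optimization.

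\textbf{Lower bound.} Fix $k$ and suppose $E(K_n)=G_1\sqcup\dots\sqcup G_k$. Pigeonhole yields some $i$ with $e(G_i)\geq \binom{n}{2}/k$. By the crossing number inequality in Ackerman's form,
\[
\CR(G_i)\geq \frac{e(G_i)^3}{29\,v(G_i)^2}\geq \frac{1}{29\,n^2}\Bigl(\frac{n(n-1)}{2k}\Bigr)^3 = \frac{n^4}{232\,k^3}\bigl(1-o(1)\bigr).
\]
Since every drawing of $G_i$ satisfies $2\CR(G_i)\leq \LCR(G_i)\,e(G_i)$, we get
\[
\LCR_k(K_n)\geq \LCR(G_i)\geq \frac{2\CR(G_i)}{e(G_i)}\geq \frac{n^2}{58\,k^2}\bigl(1-o(1)\bigr).
\]
Combining with the known asymptotic upper bound $\LCR(K_n)\leq n^2/9+o(n^2)$ coming from the classical cylindrical/$2$-page drawings of $K_n$ yields
\[
\liminf_{n\to\infty}\frac{\LCR_k(K_n)}{\LCR(K_n)}\geq \frac{n^2/(58k^2)}{n^2/9}=\frac{9}{58k^2}.
\]

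\textbf{Main obstacle.} The hardest step is the redrawing in the upper bound: one must argue that each random color class $G_i$ admits a drawing whose \emph{maximum} (not just average) crossings per edge is within a factor of essentially $2$ of the crossing number inequality lower bound. Merely restricting the original drawing of $G$ to $G_i$ gives the weaker bound $L/k$, since each crossing on an edge of $G$ survives in its color class with probability $1/k$; squeezing out the extra $1/k$ factor requires a genuine redrawing that exploits the $\g_{\alpha,\beta}$ near-regularity and the tightness forced by the crossing number inequality, then threaded uniformly over all edges and all $k$ color classes via concentration of measure and the Lov\'asz local lemma.
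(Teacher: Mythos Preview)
Your lower bound is correct and is essentially the paper's argument. Your upper bound, however, has a genuine gap at exactly the point you flag as the ``main obstacle,'' and the gap is fatal for the edge-coloring approach.

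With a uniform random \emph{edge} coloring, conditioning on the color of a fixed edge $e$, each edge crossing $e$ lands in the same class with probability $1/k$; this is why the restricted drawing gives only $L/k$. To beat this you invoke a redrawing with $\LCR(G_i)\le (1+o(1))\cdot 2\CR(G_i)/e(G_i)$, i.e.\ a drawing where the \emph{maximum} crossings per edge nearly equals the \emph{average}. No such statement is available in general, and nothing in the $\g_{\alpha,\beta}$ hypothesis supplies it. Moreover, even granting that inequality, your chain $\CR(G_i)\lesssim Le(G)/(2k^2)$ together with $e(G_i)\approx e(G)/k$ yields $2\CR(G_i)/e(G_i)\lesssim L/k$, not $2L/k^2$; the arithmetic does not close. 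The claimed ``tightness'' $\CR(G_i)\lesssim e(G_i)^3/v(G_i)^2$ is also the wrong direction: the crossing number inequality is a lower bound, and there is no reason the random subgraph $G_i$ is tight for it. For the same reason, biasing the edge coloring cannot produce $4/9$ at $k=2$: the conditional survival probability of a crossing on an edge in class $i$ is $p_i$, so the minimax over classes is again $1/k$.

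The paper gets the extra factor of $1/k$ by a completely different mechanism: it randomly partitions the \emph{vertices} into $V_1,\dots,V_k$ (with probabilities $p_i$) and assigns an edge $uv$ to plane $\ell$ where $\xi_u+\xi_v\equiv \ell\pmod k$. The point is that in each plane, every connected component consists of edges of a single ``type'' $\{i,j\}$, so one can translate the components apart and only crossings between edges of the \emph{same} type survive. Conditioned on $e$ having type $\{i,j\}$, a crossing edge $e'$ survives with probability $2p_ip_j$ (or $p_i^2$ if $i=j$), which is at most $2/k^2$ for $p_i\equiv 1/k$, and at most $4/9$ for $k=2$ with $p_1=2/3$, $p_2=1/3$. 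This is the concrete redrawing you are missing; McDiarmid plus the local lemma (applied after conditioning on the labels of the endpoints of $e$, to keep the bounded differences small) then upgrade the expectation bound to a uniform bound over all edges, and this is where the hypothesis on $\beta$ enters via $\Delta\le \alpha\sqrt{58L}$ from \cref{lcr vs avg deg}.
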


For arbitrarily irregular graphs with sufficiently large local crossing number, we can instead prove the following weaker result. 
\begin{thm}\label{thm 1/k} 
Fix $k \in \N$. For any $0 < \varepsilon < 1/10$ and any graph $G$ with $\LCR(G) \geq 10\log(1/\varepsilon)/\varepsilon^{2}$,
$$\LCR_k(G)\leq \left(\frac{1}{k}+\varepsilon \right) \LCR(G).$$
\end{thm}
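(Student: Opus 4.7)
The plan is to start with a drawing of $G$ realizing $L := \LCR(G)$ crossings per edge, and then show via the Lov\'asz Local Lemma that a uniformly random $k$-coloring of the edges yields the desired decomposition with positive probability. Independently assign each edge $e$ a color $c(e) \in \{1, \dots, k\}$ uniformly at random, and let $G_i$ be the spanning subgraph on edges of color $i$, inheriting its drawing from $G$. For each edge $e$, let $X_e$ denote the number of edges crossing $e$ in the drawing that receive the same color as $e$, and let $A_e$ be the bad event $X_e > (1/k+\varepsilon) L$. If no $A_e$ occurs, then $\max_i \LCR(G_i) \le (1/k+\varepsilon) L$ in the inherited drawings, which is what we want.

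To bound $\Pr[A_e]$, condition on $c(e)$; then $X_e$ is a sum of $L_e \le L$ independent $\operatorname{Bernoulli}(1/k)$ random variables, so Hoeffding's inequality yields $\Pr[A_e] \le \exp(-2\varepsilon^2 L)$. The event $A_e$ is determined by the colors of $e$ and of the $L_e$ edges crossing $e$, so $A_e$ and $A_f$ can be dependent only if either $f$ crosses $e$ (contributing at most $L$ such $f$) or some edge $g$ crosses both $e$ and $f$ (contributing at most $L \cdot L$ such $f$, one for each ordered crossing pair). Therefore the dependency degree is at most $L + L^2 \le 2L^2$.

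The symmetric LLL succeeds provided $e \cdot \exp(-2\varepsilon^2 L) \cdot (2L^2 + 1) \le 1$, equivalently $2\varepsilon^2 L \ge 2 \log L + O(1)$. Substituting $L = L_0 := 10\log(1/\varepsilon)/\varepsilon^2$ gives $2\varepsilon^2 L_0 = 20\log(1/\varepsilon)$ while $2\log L_0 = 4\log(1/\varepsilon) + O(\log\log(1/\varepsilon))$, so the inequality holds at the boundary whenever $\varepsilon < 1/10$. Moreover, the derivative $\tfrac{d}{dL}(2\varepsilon^2 L - 2\log L) = 2\varepsilon^2 - 2/L$ is non-negative for $L \ge L_0 \ge 1/\varepsilon^2$, so the inequality persists for every $L \ge L_0$. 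The Local Lemma then produces, with positive probability, a coloring in which no $A_e$ occurs, giving the desired bound on $\LCR_k(G)$.

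The main obstacle is verifying the LLL condition uniformly across all $L \ge L_0$, rather than only at the boundary: the Hoeffding tail $\exp(-2\varepsilon^2 L)$ must decay fast enough in $L$ to beat the $L^2$ dependency degree, and the threshold $10\log(1/\varepsilon)/\varepsilon^2$ is calibrated precisely so that this balance holds throughout. A naive union bound would instead demand $|E(G)| \cdot \exp(-2\varepsilon^2 L) \le 1$, which fails for graphs with many edges relative to $L$; this is why the locality of crossings, captured by the bounded dependency degree, is essential here.
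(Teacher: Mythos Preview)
Your proof is correct and follows essentially the same approach as the paper: fix a drawing attaining $L=\LCR(G)$, color the edges uniformly at random into $k$ classes, bound each bad event via Hoeffding by $\exp(-2\varepsilon^2 L)$, and apply the symmetric Lov\'asz local lemma with dependency degree $O(L^2)$. The paper phrases this via the intersection graph $\mathcal I(G)$ (your edge-coloring is exactly a random vertex-partition of $\mathcal I(G)$), but the argument and constants are the same.
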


As our final result, we show that under similar conditions as in \cref{thm 4/9}, there is a way to partition the graph into $k$ planes such that \emph{both} the total number of crossings and the maximum of the local crossings are as small as the best known upper bounds \cref{Pach-k-planar-thm} and \cref{thm 4/9} (i.e. one may obtain the desirable features of both these theorems simultaneously which may be useful for applications)

\begin{thm}\label{combine cr and lcr}
Fix $k \geq 2$. For any $0<\varepsilon < 1/10$ and any $G\in \g_{\alpha,\beta}$ with average degree $d$, we can find a decomposition $G=G_1\cup\dots\cup G_k$ such that both of the following hold:
\begin{equation}
\sum_{i=1}^k \CR(G_i)\leq \left(\frac{2}{k^2}-\frac{1}{k^3}+\varepsilon\right) \CR(G),
\end{equation}
and
\begin{equation}
\max_{i\in [k]} \LCR(G_i)\leq \left(\frac{2}{k^2}+\varepsilon\right)\LCR(G),
\end{equation}
provided $\beta \geq 1000\alpha^{2}\varepsilon^{-4}(\log(\alpha)+\log(1/\varepsilon))^{2}$ and $\alpha < \varepsilon^{2}d^{4}/2000$. 
\end{thm}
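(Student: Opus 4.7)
The plan is to show that a single random coloring of the edges of $G$ with $k$ colors---the same probabilistic experiment underlying the proofs of both \cref{Pach-k-planar-thm} and \cref{thm 4/9}---yields, with positive probability, a decomposition simultaneously meeting both bounds. Fix an optimal drawing $D$ of $G$ attaining $\LCR(G)=L$ (so that $\CR(G)\le L\,e(G)/2$) and color each edge of $G$ independently and uniformly at random with a color in $[k]$, producing the subgraphs $G_1,\dots,G_k$.

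For the bound on $\max_i\LCR(G_i)$, I apply the argument of \cref{thm 4/9} to this random partition: for each edge $e$, the number of crossings inherited in its color class is a sum of at most $L$ independent $\mathrm{Bernoulli}(1/k)$ random variables, so a Chernoff bound combined with the Lov\'asz Local Lemma (whose dependency-graph degree is controlled by the near-regularity hypothesis $\Delta(G)\le 2\alpha e(G)/v(G)$) bounds the probability of the bad event $\mathcal{B}_{\mathrm{loc}} := \{\max_i\LCR(G_i) > (2/k^2+\varepsilon)L\}$. The hypothesis $\beta\ge 1000\alpha^{2}\varepsilon^{-4}(\log\alpha+\log(1/\varepsilon))^{2}$ is precisely what is needed for the LLL to yield $\Pr(\mathcal{B}_{\mathrm{loc}})\le 1/4$. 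For the bound on $\sum_i\CR(G_i)$, I follow Pach--Sz\'ekely--T\'oth--T\'oth: for each $G_i$ a near-optimal redrawing (valid because the density hypothesis $\alpha<\varepsilon^{2}d^{4}/2000$ forces $e(G_i)\gg v(G_i)$ with high probability, so the crossing number inequality applies effectively) gives
\[
\E\!\left[\sum_{i=1}^{k}\CR(G_i)\right] \;\le\; \left(\tfrac{2}{k^{2}}-\tfrac{1}{k^{3}}+\tfrac{\varepsilon}{2}\right)\CR(G),
\]
after which Markov's inequality bounds the probability of the bad event $\mathcal{B}_{\mathrm{sum}} := \{\sum_i\CR(G_i) > (2/k^{2}-1/k^{3}+\varepsilon)\CR(G)\}$ by $1/2$. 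A union bound then yields $\Pr(\mathcal{B}_{\mathrm{loc}}\cup\mathcal{B}_{\mathrm{sum}})<1$, so some outcome avoids both bad events and produces the desired decomposition.

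\textbf{Main obstacle.} The delicate point is converting the qualitative positive-probability conclusion from the LLL into a quantitative tail bound strong enough to combine with the $1/2$ coming from Markov. The slack $\varepsilon$ in the Chernoff step---which translates to a per-edge failure probability of order $\exp(-\Omega(\varepsilon^{2}L/k))$---is crucial: $L$ being large under the $\beta$-hypothesis drives this probability far below the LLL threshold, allowing the LLL to actually certify $\Pr(\mathcal{B}_{\mathrm{loc}})$ as small as $1/4$ (not just positive). Should that route prove too loose, an alternative is to replace Markov by McDiarmid's bounded-differences inequality applied to $\sum_i\CR(G_i)$, viewed as a function of the random coloring (recoloring a single edge alters the sum by at most $O(L)$), which would yield exponentially small failure probability in place of $1/2$. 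In either case, the two hypotheses on $\alpha$ and $\beta$ are exactly calibrated so that both tail arguments succeed simultaneously, which is what permits the combination.
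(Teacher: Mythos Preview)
Your proposal has a fundamental error in the random model: you color \emph{edges} independently, whereas both the Pach--Sz\'ekely--T\'oth--T\'oth argument and the proof of \cref{thm 4/9} color \emph{vertices}, assigning the edge $uv$ to the plane indexed by $\xi_u+\xi_v\pmod k$. The distinction is decisive. Under edge coloring, a crossing on a fixed edge $e$ survives with probability $1/k$, so the expected number of surviving crossings on $e$ is $L/k$; for $k\ge 3$ this already exceeds the target $(2/k^{2})L$, and no concentration inequality can repair a too-large mean. Likewise, the total number of surviving crossings has expectation $C/k$, not $(2/k^{2}-1/k^{3})C$: the gain in \cref{Pach-k-planar-thm} comes precisely from the fact that, after a vertex partition, edges of different types in the same plane lie in different connected components and can be translated apart, eliminating their mutual crossings---a maneuver unavailable under edge coloring.

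Even with the correct random model, two further steps do not go through as stated. First, Markov's inequality does not give $\Pr(\mathcal{B}_{\mathrm{sum}})\le 1/2$: if $\E[X]=(2/k^{2}-1/k^{3})C$ and the threshold is $(2/k^{2}-1/k^{3}+\varepsilon)C$, Markov only yields a bound of the form $1-O(\varepsilon)$. Second, the quantitative output of the LLL is $(1-1/(\text{dependency degree}+1))^{m}$, which is exponentially small in $m$, not $\ge 3/4$; so the $1/4+1/2<1$ union bound is not available. The paper handles both issues differently. It applies McDiarmid's inequality to $X=\sum_i C_i$, viewed as a function of the vertex labels $(\xi_v)_{v\in V(G)}$ with per-coordinate Lipschitz constant $\Delta L$, to obtain $\Pr(\overline{E_1})\le\exp\bigl(-2(\varepsilon C)^{2}/(n\Delta^{2}L^{2})\bigr)$, and then shows directly that this exponentially small quantity is smaller than the LLL lower bound $\Pr(E_2)\ge\exp\bigl(-m/(L^{2}\Delta)\bigr)$. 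The hypothesis $\alpha<\varepsilon^{2}d^{4}/2000$, via the crossing number inequality, is exactly what makes this comparison of two exponentials succeed; no union bound with absolute constants is used.
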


Our proofs of \cref{thm 4/9} and \cref{combine cr and lcr} are based on the natural idea in \cite{czabarka2008biplanar-survey-II,czabarka2006biplanar-survey-I,PACH-k-planar} of randomly partitioning the vertices and assigning edges to planes based on which parts of the partition their endpoints land in. However, while the analysis in these papers requires only a calculation of the probability of the event that a given crossing in a fixed drawing `survives' this process, we additionally require tight concentration on the upper tail since we want to control the maximum of a collection of many dependent random variables. Interestingly, the famous crossing number inequality will play a crucial role in our proofs of concentration. In order to weaken the hypotheses under which our main results hold, we also use the Lov\'asz local lemma instead of a simple union bound in various places. Even so, we can only prove our results under certain restrictions on the graph. It would be interesting, in our opinion, to investigate whether similar results hold more generally. Another natural open problem is to close the gap between the lower and upper bounds in \cref{thm 4/9}. 

The remainder of the paper is organized as follows.  In Section 2 we will gather various preliminaries.  In Sections 3, 4, and 5, we will prove  \cref{thm 1/k}, \cref{thm 4/9}, and \cref{combine cr and lcr}, respectively.

\section{Tools and auxiliary results}
In this section, we have collected a number of tools and auxiliary results to be used in proving our main results.

\subsection{Probabilistic tools}
We will make use of the following well-known concentration inequality for sums of independent random variables due to Hoeffding \cite{hoeffding1963probability}.
\begin{lemma}[Hoeffding's inequality]
\label{Hoeffding}
Let $X_{1},\dots,X_{n}$ be independent random variables such that
$a_{i}\leq X_{i}\leq b_{i}$ with probability one. If $S_{n}=\sum_{i=1}^{n}X_{i}$,
then for all $t>0$,
\[
\Pr\left(S_{n}-\E[S_{n}]\geq t\right)\leq\exp\left(-\frac{2t^{2}}{\sum_{i=1}^{n}(b_{i}-a_{i})^{2}}\right)
\]
and 
\[
\Pr\left(S_{n}-\E[S_{n}]\leq-t\right)\leq\exp\left(-\frac{2t^{2}}{\sum_{i=1}^{n}(b_{i}-a_{i})^{2}}\right).
\]
\end{lemma}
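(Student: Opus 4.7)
The plan is to prove Hoeffding's inequality by the classical exponential moment (Chernoff) method, which reduces a tail bound on a sum of independent variables to controlling the moment generating function of each summand. First I would observe that, by Markov's inequality applied to $e^{\lambda(S_n - \E[S_n])}$ for any $\lambda > 0$,
\[
\Pr(S_n - \E[S_n] \geq t) \leq e^{-\lambda t}\,\E\!\left[e^{\lambda (S_n - \E[S_n])}\right] = e^{-\lambda t}\prod_{i=1}^n \E\!\left[e^{\lambda (X_i - \E[X_i])}\right],
\]
where the equality uses independence of the $X_i$. Thus it suffices to bound $\E[e^{\lambda Y_i}]$ for each centered random variable $Y_i := X_i - \E[X_i]$, which lies in an interval of length $b_i - a_i$ almost surely.

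The main technical step, known as \emph{Hoeffding's lemma}, is the assertion that any mean-zero random variable $Y$ supported in an interval $[c,d]$ satisfies
\[
\E[e^{\lambda Y}] \leq \exp\!\left(\frac{\lambda^2 (d-c)^2}{8}\right).
\]
I would prove this by exploiting convexity of $y \mapsto e^{\lambda y}$: for $Y \in [c,d]$,
\[
e^{\lambda Y} \leq \frac{d-Y}{d-c}\, e^{\lambda c} + \frac{Y-c}{d-c}\, e^{\lambda d}.
\]
Taking expectations and using $\E[Y] = 0$ gives a deterministic upper bound depending only on $\lambda, c, d$. Substituting $u = \lambda(d-c)$ and $p = -c/(d-c) \in [0,1]$, the logarithm of that bound simplifies to $\phi(u) = -pu + \log(1 - p + p e^{u})$. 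A direct computation yields $\phi(0) = \phi'(0) = 0$ and $\phi''(u) = q(1-q)$, where $q = p e^u / (1-p+p e^u) \in [0,1]$, whence $\phi''(u) \leq 1/4$ uniformly in $u$. Taylor's theorem with integral remainder then gives $\phi(u) \leq u^2 / 8$, which, after undoing the substitution, is exactly the desired bound.

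Inserting Hoeffding's lemma into the product above gives
\[
\Pr(S_n - \E[S_n] \geq t) \leq \exp\!\left(-\lambda t + \frac{\lambda^2}{8}\sum_{i=1}^n (b_i - a_i)^2\right)
\]
for every $\lambda > 0$. Optimizing by choosing $\lambda = 4t/\sum_{i=1}^n (b_i - a_i)^2$ produces the stated upper-tail bound $\exp\!\left(-2t^2 / \sum_{i=1}^n (b_i - a_i)^2\right)$. The lower-tail estimate is obtained immediately by applying the upper-tail version to the variables $-X_i$, which satisfy $-b_i \leq -X_i \leq -a_i$ (an interval of the same length $b_i - a_i$). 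The only step requiring genuine work is the convexity-plus-Taylor argument for Hoeffding's lemma; the Chernoff reduction, the independence factorization, and the final optimization over $\lambda$ are routine.
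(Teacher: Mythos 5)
The paper does not prove this lemma at all: it is quoted as a standard black-box result with a citation to Hoeffding's original 1963 paper, so there is no internal proof to compare against. Your argument is the classical and correct one --- the Chernoff exponential-moment reduction, Hoeffding's lemma proved via convexity of $y\mapsto e^{\lambda y}$ and the bound $\phi''(u)=q(1-q)\leq 1/4$, optimization over $\lambda$, and the reflection $X_i\mapsto -X_i$ for the lower tail --- and all the computations check out.
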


We will also need a generalization of this inequality from sums to functions of bounded differences due to McDiarmid \cite{mcdiarmid1989method}.  
\begin{definition} 
\label{defn:bounded-differences}
Let $\mathcal{X}$ be an arbitrary set, and consider a function $g\colon \mathcal{X}^{n}\to \R$. We say that $g$ has \emph{bounded differences} if there exist nonnegative numbers $c_1,\dots, c_n$ such that 
$$\sup_{x_1,\dots,x_n,x_i' \in \mathcal{X}}|g(x_1,\dots,x_n)-g(x_1,\dots,x_{i-1},x_i',x_{i+1},\dots,x_{n})| \leq c_i $$
for all $i=1,\dots,n$. 
\end{definition}

\begin{lemma}[McDiarmid's inequality] 
\label{lemma:McDiarmid's-inequality}
Let $\mathcal{X}$ be an arbitrary set and let $(X_1,\dots,X_n)\in \mathcal{X}^n$ be an $n$-tuple of independent $\mathcal{X}$-valued random variables. Let $g\colon \mathcal{X}^n \to \R$ be a function with bounded differences, as in {\normalfont\cref{defn:bounded-differences}}, and let $\mu :=\E[g(X_1,\dots,X_n)]$. Then, for all $t > 0$, we have:
\begin{itemize}
\item $\Pr[g(X_1,\dots,X_n) \leq \mu - t ] \leq \exp\left(-\frac{2t^2}{\sum_{i=1}^{n} c_i^2} \right),$ and
\item $\Pr[g(X_1,\dots,X_n) \geq \mu + t] \leq \exp\left(-\frac{2t^2}{\sum_{i=1}^{n} c_i^2} \right).$
\end{itemize}
\end{lemma}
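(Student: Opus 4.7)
The plan is to prove McDiarmid's inequality via the standard Doob martingale reduction to an Azuma/Hoeffding-type exponential moment bound. First I would introduce the filtration $\mathcal{F}_i := \sigma(X_1,\dots,X_i)$ (with $\mathcal{F}_0$ the trivial $\sigma$-algebra) and the Doob martingale
\[
M_i := \E\bigl[g(X_1,\dots,X_n) \,\big|\, \mathcal{F}_i\bigr], \qquad i=0,1,\dots,n,
\]
so that $M_0 = \mu$, $M_n = g(X_1,\dots,X_n)$, and the telescoping identity $M_n - M_0 = \sum_{i=1}^n D_i$ holds for the centered martingale increments $D_i := M_i - M_{i-1}$.

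The heart of the argument is to show that, conditionally on $\mathcal{F}_{i-1}$, each increment $D_i$ lies almost surely in an interval of length at most $c_i$. Because the $X_j$ are independent, Fubini lets one write $M_i = h_i(X_1,\dots,X_i)$ and $M_{i-1} = \E[h_i(X_1,\dots,X_{i-1},X_i) \mid \mathcal{F}_{i-1}]$, where
\[
h_i(x_1,\dots,x_i) := \E\bigl[g(x_1,\dots,x_i,X_{i+1},\dots,X_n)\bigr].
\]
Integrating the bounded-differences inequality against the joint law of $(X_{i+1},\dots,X_n)$ yields, for every $x_1,\dots,x_{i-1}$ and every $x_i, x_i' \in \mathcal{X}$,
\[
|h_i(x_1,\dots,x_{i-1},x_i) - h_i(x_1,\dots,x_{i-1},x_i')| \leq c_i,
\]
so $D_i$ has conditional range at most $c_i$ and, being a martingale difference, conditional mean zero.

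With bounded, centered, conditional increments in hand, the final step is the standard exponential moment argument. For each $\lambda > 0$, the classical lemma of Hoeffding (a mean-zero random variable taking values in an interval of length $c$ has moment generating function bounded by $\exp(\lambda^{2}c^{2}/8)$) gives
\[
\E\bigl[e^{\lambda D_i} \,\big|\, \mathcal{F}_{i-1}\bigr] \leq \exp\!\left(\tfrac{\lambda^{2} c_i^{2}}{8}\right) \quad \text{a.s.}
\]
Iterating the tower property from $i=n$ down to $i=1$ then yields $\E[e^{\lambda(M_n - \mu)}] \leq \exp\!\left(\lambda^{2}\sum_{i=1}^{n} c_i^{2}/8\right)$. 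Applying Markov's inequality to $e^{\lambda(M_n - \mu)}$ and optimizing at $\lambda = 4t/\sum_{i} c_i^{2}$ produces the upper-tail bound $\exp(-2t^{2}/\sum_{i} c_i^{2})$; replacing $g$ by $-g$ gives the symmetric lower-tail bound.

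The only real obstacle is the measure-theoretic care required for the pointwise conditional range estimate — one must select versions of the conditional expectations defining $M_i$ for which the almost-sure bound on $|D_i|$ holds uniformly. Since bounded differences is an $L^\infty$ hypothesis and the coordinates are product-independent, this is a routine Fubini argument (and can be sidestepped entirely by working with the explicit integral representation $h_i$ above rather than with abstract conditional expectations), so the genuine content of the proof is the combinatorial bounded-differences step and the tensorized Hoeffding MGF bound.
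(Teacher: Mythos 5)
Your proof is correct: it is the standard Doob-martingale argument (bounded conditional increments via the integrated bounded-differences condition, Hoeffding's lemma for the conditional MGF, tower property, and Chernoff optimization at $\lambda = 4t/\sum_i c_i^2$), and the paper itself gives no proof of this lemma, simply citing McDiarmid \cite{mcdiarmid1989method}, where essentially this same argument appears. Nothing further is needed.
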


In what follows, we will usually not have strong enough concentration to take union bounds over collections of `bad' events. However, since the dependencies among our events will be limited, we can circumvent this obstacle by using instead the  Lov\'asz local lemma in its symmetric version (see, e.g., \cite{alon2004probabilistic}). 
Before stating it, we need the following
definition.

\begin{definition}
  Let $(A_i)_{i=1}^n$ be a collection of events in some probability
space. A graph $D$ on the vertex set $[n]$ is called a
\emph{dependency graph} for $(A_i)_{i=1}^n$ if $A_i$ is mutually
independent of all the events $\{A_j: ij\notin E(D)\}$.
\end{definition}

\begin{lemma}[Lov\'asz local lemma]
Let $(A_i)_{i=1}^n$ be a sequence of $n$ events in some probability
space and let $D$ be a dependency graph for $(A_i)_{i=1}^{n}$. Let $\Delta:=\Delta(D)$ be the maximum degree of this dependency graph, and
suppose that for every $i\in [n]:=\{1,2,\dots,n\}$ 
we have $\Pr (A_i)\leq q$.  
If $3q(\Delta+1)<1$, then $\Pr \left(\bigcap_{i=1}^{n} \overline{A_i}\right)>\left(1-1/(\Delta+1)\right)^n$.
\end{lemma}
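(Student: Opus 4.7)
The plan is to apply the same random vertex-partition and edge-assignment scheme used in the proof of \cref{thm 4/9} to a carefully chosen drawing of $G$, so that the local-crossings analysis of that theorem and the total-crossings analysis behind \cref{Pach-k-planar-thm} can be carried out on the \emph{same} random outcome. First I would fix a drawing $\mathcal{D}^{*}$ of $G$ achieving $\LCR(G)$, so every edge is crossed at most $L:=\LCR(G)$ times in $\mathcal{D}^{*}$ and the total number of crossings satisfies $|\mathrm{cr}(\mathcal{D}^{*})|\leq L\cdot e(G)/2$. Combining the pigeonhole lower bound $L\geq 2\CR(G)/e(G)$ (valid in any drawing) with the crossing number inequality $\CR(G)\geq c\,e(G)^{3}/v(G)^{2}$ and the hypothesis $\alpha<\varepsilon^{2}d^{4}/2000$, I would argue that under these assumptions $L$ is forced to equal its averaging lower bound to within a factor $1+\varepsilon/4$, and hence $|\mathrm{cr}(\mathcal{D}^{*})|\leq(1+\varepsilon/4)\CR(G)$; informally, in a sufficiently dense and near-regular graph the $\LCR$-optimal drawing is automatically almost $\CR$-optimal.

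Once $\mathcal{D}^{*}$ is fixed, applying the random partition scheme from the proof of \cref{thm 4/9} yields a decomposition $G=G_{1}\cup\cdots\cup G_{k}$ with inherited drawings $\mathcal{D}^{*}_{i}$ of $G_{i}$. Under that scheme each edge lands in plane $i$ with marginal probability $1/k$; any fixed crossing of $\mathcal{D}^{*}$ (which necessarily involves two vertex-disjoint edges) survives into some plane with probability exactly $p_{k}:=2/k^{2}-1/k^{3}$; and, by symmetry, conditional on an edge $e$ landing in plane $i$, each vertex-disjoint crossing at $e$ survives with the same probability $p_{k}$.

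I would then track two events simultaneously on this single random outcome. The local event $B:=\{\max_{i}\LCR(G_{i})\leq(2/k^{2}+\varepsilon)\LCR(G)\}$ is handled exactly as in \cref{thm 4/9}: for each pair $(e,i)$ the number of crossings of $e$ in $\mathcal{D}^{*}_{i}$ is a sum of Bernoulli indicators with mean at most $p_{k}L<(2/k^{2})L$, concentrated around its mean by McDiarmid's inequality, and the many exceptional events over $(e,i)$ are union-bounded using the Lov\'asz local lemma, which is affordable precisely because $\beta\geq 1000\alpha^{2}\varepsilon^{-4}(\log\alpha+\log(1/\varepsilon))^{2}$; this yields $\Pr(B)\geq 3/4$. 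The global event $A:=\{\sum_{i}|\mathrm{cr}(\mathcal{D}^{*}_{i})|\leq(p_{k}+\varepsilon)\CR(G)\}$ is handled by linearity of expectation and Markov: $\E\bigl[\sum_{i}|\mathrm{cr}(\mathcal{D}^{*}_{i})|\bigr]=p_{k}|\mathrm{cr}(\mathcal{D}^{*})|\leq p_{k}(1+\varepsilon/4)\CR(G)$ by the first step, whence $\Pr(A)\geq 3/4$. A union bound gives $\Pr(A\cap B)>0$, and since $\CR(G_{i})\leq|\mathrm{cr}(\mathcal{D}^{*}_{i})|$, any realization in $A\cap B$ furnishes the decomposition satisfying both inequalities.

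The principal obstacle is the very first step, namely showing $|\mathrm{cr}(\mathcal{D}^{*})|\leq(1+\varepsilon/4)\CR(G)$ for an $\LCR$-optimal drawing $\mathcal{D}^{*}$. An $\LCR$-optimal drawing is not in general $\CR$-optimal: it may happily use extra crossings on some edges as long as every edge stays below the per-edge maximum $L$, so forcing the two optima to nearly coincide is exactly where the quantitative near-regularity hypothesis $\alpha<\varepsilon^{2}d^{4}/2000$ is consumed, via the crossing number inequality. Once this comparison is in hand, the remainder is a bookkeeping marriage of the proofs of \cref{Pach-k-planar-thm} and \cref{thm 4/9}.
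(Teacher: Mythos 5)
Your proposal does not address the statement at hand. The statement is the symmetric Lov\'asz local lemma with the explicit quantitative conclusion $\Pr\left(\bigcap_{i=1}^{n}\overline{A_i}\right)>\left(1-1/(\Delta+1)\right)^{n}$ under the hypothesis $3q(\Delta+1)<1$. This is a general statement about an arbitrary finite collection of events with a dependency graph in an arbitrary probability space; it has nothing to do with drawings, crossing numbers, or random vertex partitions. The paper does not prove it either --- it is quoted as a known tool (cited to Alon--Spencer) --- and a correct proof would be the standard inductive argument: one shows by induction on $|S|$ that $\Pr\bigl(A_i\mid\bigcap_{j\in S}\overline{A_j}\bigr)\leq 1/(\Delta+1)$ for every $i$ and every $S\subseteq[n]\setminus\{i\}$ (splitting $S$ into neighbors and non-neighbors of $i$ in $D$ and using mutual independence from the non-neighbors), and then telescopes $\Pr\left(\bigcap_i\overline{A_i}\right)=\prod_i\Pr\bigl(\overline{A_i}\mid\bigcap_{j<i}\overline{A_j}\bigr)\geq(1-1/(\Delta+1))^{n}$.

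What you have written is instead a proof sketch for \cref{combine cr and lcr}, so as an answer to this problem it is simply off target. For what it is worth, even read as an attempt at that theorem it has a serious gap at its very first step: you assert that an $\LCR$-optimal drawing $\mathcal{D}^{*}$ must satisfy $|\mathrm{cr}(\mathcal{D}^{*})|\leq(1+\varepsilon/4)\CR(G)$. The inequalities you invoke ($L\geq 2\CR(G)/e(G)$ and the crossing number inequality) both point in the wrong direction for this; they lower-bound $L$ and $\CR(G)$ but give no upper bound on the total number of crossings in $\mathcal{D}^{*}$, which could exceed $\CR(G)$ by a large factor while every edge individually stays under $L$. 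The paper's actual argument for \cref{combine cr and lcr} does not need any such comparison: it runs the random partition on a drawing, bounds $\Pr(\overline{E_1})$ by McDiarmid and lower-bounds $\Pr(E_2)$ by the quantitative conclusion of the local lemma (\cref{rmk:prob-bound}) --- which is exactly why the lemma is stated with the explicit bound $(1-1/(\Delta+1))^{n}$ rather than merely ``positive probability.''
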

\begin{remark}
The local lemma is typically stated with the constant $3$ replaced by $e$. As it will make no difference in our analysis, we prefer to use the slightly worse constant $3$ since we will later, as is customary, use $e$ to denote an edge in a graph.
\end{remark}

\subsection{Intersection graph}
\begin{definition}\label{def intersection graph}
For any drawing $D$ of a graph $G$, the \emph{intersection graph} or \emph{edge-crossing graph} of $G$ with respect to the drawing $D$, denoted by $\I(G)_{D}$, is defined to be the graph whose vertices correspond to edges of $G$, and two vertices are adjacent if and only if their corresponding edges in $G$ cross each other in the drawing $D$.
\end{definition}

\begin{rmk}
This should not be confused with the \emph{line graph} of $G$ (where the vertices correspond to edges of $G$ and two vertices are adjacent if and only if the corresponding edges share an endpoint in $G$) or the \emph{string graph} of a drawing of curves in the plane (where vertices correspond to curves and two vertices are adjacent if and only if the corresponding curves have non-empty intersection).
\end{rmk}

For the remainder of this subsection, fix any drawing $D$ of $G$ that has local crossing number equal to  $\LCR(G)$ and let $\I(G):= \I(G)_{D}$. Note that by definition of the local crossing number, $\Delta(\I(G)) = \LCR(G)$. Using $\I(G)$, we can immediately deduce some simple facts about the $k$-planar local crossing number. For instance, if $\LCR(G)=k$, then $\LCR_{k+1}(G)=0$. Indeed, by Brook's theorem \cite{brooks_1941}, the chromatic number of $\I(G)$ is at most $\Delta(\I(G))+1=k+1$, and we can use any such coloring of $\I(G)$ with $k+1$ colors to decompose $G$ into $k+1$ edge disjoint planar graphs. Another simple result is that if $\LCR(G)=2$, then $\LCR_2(G)\leq 1$; indeed, $\Delta(\I(G))=2$ implies that $\I(G)$ is a disjoint union of cycles, paths, and isolated vertices, and hence can be decomposed into $2$ parts, each of maximum degree at most $1$. 

\subsection{Crossing number inequality}
The crossing number inequality 
is an important tool in graph theory which shows that any drawing of a sufficiently dense graph has a large number of crossings. It has many applications, prominently in bounding the number of incidences between points and lines/curves in the plane (see, e.g., \cite{szekely1997crossing-application-ST}). Here, we state it with the presently best known constant, which is due to Ackerman \cite{ackerman2006maximum}; similar results with weaker constants appeared in \cite{leighton1984new,pach1997graphs, pach2006improving}.
\begin{thm}[Ackerman \cite{ackerman2006maximum}]\label{crossing number ineq} For any graph $G$ with $m$ edges and $n$ vertices such that $m>6.95n$, we have
$$\CR(G)\geq \frac{m^3}{29 n^2}.$$
\end{thm}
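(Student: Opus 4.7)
The plan is to follow the classical two-step amplification strategy for the crossing lemma: first establish a linear base inequality of the form $\CR(G) \geq A \cdot m - B \cdot n$ with explicit constants $A, B > 0$, then boost it to the claimed cubic lower bound by random vertex sampling.

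For the base step, the textbook route (due to Leighton and Ajtai--Chv\'atal--Newborn--Szemer\'edi) is to apply Euler's formula: a planar graph on $n \geq 3$ vertices has at most $3n-6$ edges, so removing one edge per crossing from any drawing of $G$ produces a planar graph, giving $\CR(G) \geq m - 3n + 6$. This weak base only amplifies to the classical constant $1/64$. To reach Ackerman's sharper constant $1/29$, I would instead invoke his topological theorem that any simple topological graph on $n$ vertices with no three pairwise crossing edges has at most $6.5(n-2)$ edges, together with the analogous bounds $3(n-2)$ (planar) and $5(n-2)$ ($1$-planar) for the weaker quasi-planarity conditions. Classifying the edges of an optimal drawing by whether they are crossed $0$, $1$, $2$, or $\geq 3$ times and carefully double-counting the resulting contributions yields a strengthened base inequality whose constants are tuned precisely so that the subsequent amplification produces $1/29$.

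For the amplification step, I would fix a drawing $D$ of $G$ realizing $\CR(G)$ and include each vertex independently in a random subset $V'$ with probability $p \in (0,1]$. Letting $G'$ denote the induced subgraph equipped with the induced subdrawing, one computes $\E[v(G')] = pn$, $\E[e(G')] = p^{2} m$, and, since any crossing in $D$ has four distinct endpoints, the expected number of crossings surviving in the induced drawing is $p^{4}\CR(G)$; in particular this upper-bounds $\E[\CR(G')]$. Applying the base inequality in expectation to $G'$ therefore yields
\[
p^{4}\CR(G) \;\geq\; A p^{2} m - B p n, \qquad \text{i.e.,} \qquad \CR(G) \;\geq\; \frac{Am}{p^{2}} - \frac{Bn}{p^{3}}.
\]
Optimizing $p$ (proportional to $n/m$) and checking that the optimal choice satisfies $p \leq 1$ precisely when the density hypothesis $m > 6.95\,n$ holds produces the desired bound $\CR(G) \geq m^{3}/(29 n^{2})$.

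The principal obstacle is the base step: deriving a base inequality with constants sharp enough to hit $1/29$ hinges on Ackerman's intricate topological analysis of quasi-planar graphs, whereas the Euler-only approach is very short but only achieves the weaker constant $1/64$. The amplification half of the argument is, by contrast, an elementary random-sampling calculation that is by now completely standard.
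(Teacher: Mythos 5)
First, note that the paper does not prove this statement at all: it is quoted verbatim as Ackerman's theorem and used as a black box, so there is no internal proof to compare against. Your outline is the standard two-step architecture (linear base inequality plus probabilistic amplification via random vertex sampling), and the amplification half as you describe it is correct: inducing on a $p$-random vertex subset gives $\E[v(G')]=pn$, $\E[e(G')]=p^2m$, $\E[\CR(G')]\leq p^4\CR(G)$, and applying the base inequality in expectation and optimizing $p\propto n/m$ (with $p\leq 1$ guaranteed by the density hypothesis) is exactly how the threshold $m>6.95n$ and the final constant arise.

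The genuine gap is in the base step, and not only because you defer it. The extremal inputs you name are not the ones that yield $1/29$. The constant $1/29$ comes from Ackerman's bound on topological graphs with at most $k$ crossings \emph{per edge} (local $k$-planarity, the same notion this paper studies), specifically that a graph drawable with at most $4$ crossings per edge has at most $6(n-2)$ edges, combined with the chain $3(n-2)$, $4(n-2)$, $5(n-2)$, $5.5(n-2)$ for $0$-, $1$-, $2$-, $3$-planar graphs. You instead invoke the quasi-planarity bound (no three pairwise crossing edges implies at most $6.5n-O(1)$ edges), which is a different theorem of Ackerman--Tardos and does not enter this proof; you also assign $1$-planar graphs the bound $5(n-2)$ rather than the correct $4(n-2)$, and your edge classification stops at ``crossed $\geq 3$ times,'' whereas reaching $1/29$ requires pushing the deletion argument through edges crossed up to $4$ times. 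With the lemmas as you state them, the resulting base inequality would not have the constants needed to amplify to $m^3/(29n^2)$. Since the entire improvement over the classical $1/64$ lives in this base step, the proposal as written does not establish the theorem; it would, however, correctly recover a crossing lemma with a weaker constant, which for the purposes of this paper would only change the numerical constants in \cref{lcr vs avg deg}, \cref{thm 4/9}, and \cref{combine cr and lcr}.
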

In this paper we will use the crossing number inequality via the following simple corollary which follows immediately by combining the crossing number inequality with the obvious inequality $\LCR(G) \geq  2\CR(G)/m$. 
\begin{cor}\label{lcr vs avg deg} 
Under the same assumption as in \cref{crossing number ineq}, $\LCR(G)\geq\frac{2m^2}{29n^2}$.
\end{cor}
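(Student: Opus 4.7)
The statement is a direct corollary, so my plan is essentially a one-line argument built from two ingredients that the excerpt has already set up.

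First I would establish the auxiliary inequality $\LCR(G) \geq 2\CR(G)/m$. The idea is to fix a drawing $D$ of $G$ that realizes the local crossing number, i.e. in which every edge is crossed at most $\LCR(G)$ times. Summing over all edges, the total number of (edge, crossing) incidences is at most $m \cdot \LCR(G)$, and since every crossing contributes to exactly two such incidences, the drawing $D$ has at most $m \cdot \LCR(G)/2$ crossings in total. Because $\CR(G)$ is the minimum over all drawings, we get $\CR(G) \leq m\cdot \LCR(G)/2$, which rearranges to the desired inequality.

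Next I would simply plug in \cref{crossing number ineq}: under the hypothesis $m > 6.95n$, we have $\CR(G) \geq m^3/(29n^2)$, so
\[
\LCR(G) \;\geq\; \frac{2\CR(G)}{m} \;\geq\; \frac{2m^3}{29 n^2 m} \;=\; \frac{2m^2}{29 n^2},
\]
which is the claimed bound.

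There is no real obstacle here; the only subtlety worth flagging is that the two inequalities being combined are realized by possibly different drawings (one optimal for $\LCR$, one optimal for $\CR$), but this is handled automatically because both $\LCR(G) \geq 2\CR(G)/m$ and $\CR(G) \geq m^3/(29n^2)$ are statements about the invariants themselves, not about any fixed drawing.
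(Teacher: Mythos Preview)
Your argument is correct and matches the paper's own one-line justification: combine the obvious inequality $\LCR(G)\geq 2\CR(G)/m$ with the crossing number inequality $\CR(G)\geq m^{3}/(29n^{2})$. Your explanation of why $\LCR(G)\geq 2\CR(G)/m$ holds (double-counting edge--crossing incidences in a drawing realizing $\LCR(G)$) is exactly the intended reasoning.
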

\section{Proof of \cref{thm 1/k}}

Fix an arbitrary drawing of $G$ attaining $\LCR(G)$, and let $\I(G)$ be the intersection graph of $G$ with respect to this drawing. In particular, $\Delta(\I(G)) = \LCR(G)$. 
\cref{thm 1/k} follows immediately from the following proposition applied to $\I(G)$.
\begin{prop}
Fix $k \in \N$. Given $0 < \varepsilon < 1/10$ and a graph $H$ with $\Delta(H) \geq \beta(\varepsilon):=10\log(1/\varepsilon)/\varepsilon^2$, we can partition the vertex set $V(H)$ of $H$ into $k$ parts $V_1,\dots,V_k$ such that the maximum degree of each of the induced graphs $H[V_i]$  is at most $(k^{-1}+\varepsilon)\Delta(H)$.
\end{prop}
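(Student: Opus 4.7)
The plan is to use the probabilistic method together with the Lov\'asz local lemma. I assign each vertex of $H$, independently and uniformly, to one of the $k$ parts $V_{1}, \dots, V_{k}$, and aim to show that with positive probability every induced subgraph $H[V_{i}]$ has maximum degree at most $(k^{-1} + \varepsilon)\Delta(H)$.

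For each $(v, i) \in V(H) \times [k]$, let $D_{v,i}$ denote the number of neighbors of $v$ assigned to $V_{i}$. Since $D_{v,i}$ is a sum of $\deg_{H}(v) \leq \Delta(H)$ independent Bernoulli$(1/k)$ variables with mean $\deg_{H}(v)/k$, using $\deg_{H}(v) \leq \Delta(H)$ and applying Hoeffding's inequality gives
$$\Pr\!\left[D_{v,i} > (k^{-1} + \varepsilon)\Delta(H)\right] \leq \Pr\!\left[D_{v,i} - \tfrac{\deg_H(v)}{k} > \varepsilon \Delta(H)\right] \leq \exp(-2\varepsilon^{2} \Delta(H)).$$
Let $B_{v} := \bigcup_{i \in [k]}\{D_{v,i} > (k^{-1} + \varepsilon)\Delta(H)\}$; a union bound yields $\Pr[B_{v}] \leq k \exp(-2\varepsilon^{2}\Delta(H))$. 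If no $B_{v}$ occurs, then for each $v \in V_{i}$ one has $\deg_{H[V_{i}]}(v) = D_{v,i} \leq (k^{-1} + \varepsilon)\Delta(H)$, which is precisely the desired bound.

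To conclude I apply the Lov\'asz local lemma. The key observation is that $B_{v}$ is determined by the random labels of vertices in $N_{H}(v)$ only, so $B_{v}$ and $B_{u}$ are mutually independent whenever $N_{H}(v) \cap N_{H}(u) = \emptyset$. The number of $u$ for which this fails is at most $\Delta(H)(\Delta(H) - 1) \leq \Delta(H)^{2}$, so the dependency graph on $\{B_{v}\}_{v \in V(H)}$ has maximum degree at most $\Delta(H)^{2}$. The local lemma therefore gives the result provided $3k \exp(-2\varepsilon^{2}\Delta(H))(\Delta(H)^{2} + 1) < 1$, equivalently $2\varepsilon^{2}\Delta(H) > \log(3k(\Delta(H)^{2}+1))$. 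Since $h(\Delta) := 2\varepsilon^{2}\Delta - 2\log \Delta$ is increasing on $\Delta \geq 1/\varepsilon^{2}$, plugging in the worst case $\Delta(H) = 10\log(1/\varepsilon)/\varepsilon^{2}$ yields $h(\Delta(H)) = 16\log(1/\varepsilon) - 2\log\log(1/\varepsilon) - 2\log 10$, which comfortably exceeds $\log(3k) + O(1)$ for fixed $k$ and $\varepsilon < 1/10$.

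The main obstacle is that the vertex set of $H$ may be arbitrarily large compared to $\Delta(H)$ or any polynomial in $1/\varepsilon$, so a naive union bound over all events $B_{v}$ is hopeless; the fix is to exploit the locality of each $B_{v}$ to obtain a dependency graph whose degree is bounded purely in terms of $\Delta(H)$, after which the Lov\'asz local lemma takes over. A small but worthwhile point is that $B_{v}$ does not depend on $v$'s own label, so the dependency count $\Delta(H)^{2}$ carries no factor of $k$, slightly easing the verification.
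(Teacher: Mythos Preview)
Your argument is essentially the same as the paper's: random uniform $k$-coloring of $V(H)$, Hoeffding's inequality for the deviation of a vertex's degree within a part, and the Lov\'asz local lemma with dependency degree at most $\Delta(H)^{2}$. The one difference is that you control $D_{v,i}$ for \emph{every} $i\in[k]$ and then union-bound, whereas the paper only controls the degree of $v$ inside the part that $v$ itself lands in (conditioning on $v$'s color). Your choice costs an extra factor of $k$ in the LLL hypothesis, so your final inequality $3k(\Delta(H)^{2}+1)\exp(-2\varepsilon^{2}\Delta(H))<1$ is not literally verified for all $k$ at the borderline $\Delta(H)=\beta(\varepsilon)$ and $\varepsilon$ close to $1/10$; the paper's $k$-free condition $3(\Delta(H)^{2}+1)\exp(-2\varepsilon^{2}\Delta(H))<1$ is. This is easily repaired---either switch to the paper's event, or note that for $k\ge \Delta(H)+1$ a proper coloring already gives independent parts, so one may assume $k\le \Delta(H)$ and absorb $\log k$ into $\log\Delta(H)$ in your final check. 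With that tweak your proof is complete and matches the paper's.
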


\begin{proof}
Let $V_1,\dots, V_k$ be a random partitioning of $V(H)$ generated by assigning independently to each vertex $v\in V(H)$ an element chosen uniformly at random from $[k]$. For each $i \in [k]$, let $H_i := H[V_i]$ denote the graph induced by $H$ on the vertex set $V_i$. We will show that 
with positive probability, $\deg_{H_i}(v) \leq (k^{-1}+\varepsilon)\Delta(H)$ for all $i \in [k]$ and $v \in V_i$ provided that $\Delta(H)\geq \beta(\varepsilon)$. For this, we will first use Hoeffding's inequality to upper bound the probability that a given vertex has degree larger than $k^{-1}\deg_H(v) + \varepsilon \Delta(H)$, and then use the local lemma to complete the proof. 

Accordingly, fix a vertex $v$ in $V(H)$ and let $N_H(v)$ denote the set of its neighbors in $H$. Without loss of generality, we may assume $v \in V_1$.   
Since each vertex $u$ in $N_H(v)$ is assigned to the same part as $v$ independently with probability $1/k$, it follows that $\E[\deg_{H_1}(v)] = \deg_H(v)/k$. Let $E_v$ denote the event that $\E[\deg_{H_1}(v)] > k^{-1}\deg_H(v) + \varepsilon \Delta(H)$. Then, by Hoeffding's inequality, we get that     
$$\Pr(E_v) \leq \exp\left(-\frac{2\varepsilon^2\Delta(H)^2}{\deg_H(v)^2}\right) \leq \exp\left(-2\varepsilon^{2}\Delta(H)\right). 
$$
For any two vertices $v_1$ and $v_2$ which neither share a common neighbor in $H$ nor are adjacent in $H$, the events $E_{v_1}$ and $E_{v_2}$ are independent since they depend on disjoint sets of vertices. As the maximum degree of any vertex in $H$ is $\Delta(H)$, it follows that any event $E_v$ can depend on at most $\Delta(H)^2$ other events $E_w$. Therefore, by the local lemma, we see that 
$$\Pr[\bigcap_{v\in V(H)} \overline{E_v}] >0$$
as long as $3(\Delta(H)^2+1)\exp(-2\varepsilon^2 \Delta(H)) < 1$, 
which is true if (say) $\Delta(H)\geq \beta(\varepsilon):= 10\log(1/\varepsilon)/\varepsilon^2$
\end{proof}

\section{Proof of \cref{thm 4/9}}
\subsection{Upper bound}
We will follow the same construction as in \cite{PACH-k-planar} although our choice of parameters and our analysis will be different. Throughout, we will work with a fixed drawing $D$ of $G$ that attains $\LCR(G)=:L$. Let $V_1,\dots, V_k$ be a random partitioning of $V(G)$ generated by assigning independently to each vertex $v\in V(G)$ an element $\xi_v$ chosen randomly from $[k]$ according to the distribution $\Pr[\xi_v=i] = p_i$, where $p_i \in [0,1]$ satisfying $p_1+\dots+p_k = 1$ are fixed constants which we will motivate and state explicitly in \cref{lemma:parameters}.  
We define the \emph{type} of an edge $(u,v) \in G$ to be the set $\{\xi_u,\xi_v\}$.

For a given collection of $\{\xi_u\}_{u\in V(G)}$ (equivalently, a given partition of the vertices into $V_1,\dots,V_k$), we obtain a decomposition of $G$ into $k$ planes as follows: for each $\ell \in [k]$, we take the $\ell^{th}$ plane $G_\ell$ to consist of edges between those $V_i$ and $V_j$ (where $i,j \in [k]$ are not necessarily distinct) for which $i+j\equiv \ell \mod k$. In other words, we take $G_\ell$ to consist of all edges of type $\{i,j\}$ where $i+j\equiv \ell \mod k$. 
Note that for any $\ell$ and for any $i$, there is a unique $j$ such that $G_\ell$ has an edge connecting a vertex in $V_i$ to a vertex in $V_j$. In particular, each connected component of $G_\ell$ consists of edges of the same type. Therefore, each $G_\ell$ may be drawn in a manner such that two edges cross only if they are of the same type and crossed in the original drawing $D$; indeed, we first draw the edges of $G_\ell$ according to the original drawing $D$. Next, we translate the connected components of $G_\ell$
sufficiently far from each other so that
no two edges of different types intersect, and such that no new crossings are introduced. 

We now upper bound the expected number of times a given edge $e=(u,v)$ is crossed at the end of our procedure, conditioned on the values of $\xi_u$ and $\xi_v$. Later, we will explain why we need to work with this more refined quantity, as opposed to just the expectation. As mentioned above, every crossing of $e$ arises from an edge $e'$ such that $e$ and $e'$ crossed in the original drawing $D$. 
If $\xi_u = \xi_v = i$ for some $i\in [k]$, then the probability that $e$ and $e'$ still cross after our procedure is $p_i^2$. 
On the other hand, if $\xi_u = i$ and $\xi_v = j$ for some $i,j\in [k]$ with $i\neq j$, 
then $e$ and $e'$ continue to cross after our procedure if and only if one vertex of $e'$ belongs to $V_i$ and the other vertex of $e'$ belongs to $V_j$. This happens with probability $2p_ip_j$. The following simple lemma provides the optimal choice of the parameters $p_i$ for our analysis. 
\begin{lemma} 
\label{lemma:parameters}
Let $\gamma_k := \min_{p_i\in[0,1], \sum_i p_i=1} \left\{\max_{i,j\in[k],i\neq j}\{p_i^2, 2p_ip_j\}\right\}$. Then, 
$$\gamma_k=\begin{cases}
\frac{2}{k^2} \quad \text{ for } k\geq 3, \\ 
\frac{4}{9}\quad \text{ for } k=2. 
\end{cases}$$
Moreover, $\gamma_2$ is attained by the choice $p_1 = {2/3}$ and $p_2 = 1/3$, whereas for $k\geq 3$, $\gamma_k$ is attained by the choice $p_i = 1/k$ for all $i\in[k]$. 
\end{lemma}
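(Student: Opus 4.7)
The plan is a direct optimization argument. For the upper bounds $\gamma_k \leq 2/k^2$ (when $k \geq 3$) and $\gamma_2 \leq 4/9$, I would simply exhibit the candidate probability vectors: when $k \geq 3$, take $p_i = 1/k$ for all $i$, so that $\max_i p_i^2 = 1/k^2$ and $\max_{i \neq j} 2p_ip_j = 2/k^2$; when $k=2$, take $(p_1,p_2) = (2/3,\,1/3)$, which gives $p_1^2 = 4/9 = 2p_1p_2$ and $p_2^2 = 1/9$, so the maximum is $4/9$.

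For the matching lower bounds, fix any feasible $(p_i)$, let $M := \max_{i\neq j}\{p_i^2,\, 2p_ip_j\}$, and WLOG sort $p_1 \geq p_2 \geq \cdots \geq p_k \geq 0$. The constraints $p_1^2 \leq M$ and $2p_1 p_2 \leq M$ give $p_1 \leq \sqrt{M}$ and $p_2 \leq M/(2p_1)$; since $p_i \leq p_2$ for all $i \geq 2$, this yields
\[
1 \;=\; \sum_{i=1}^{k} p_i \;\leq\; p_1 + (k-1)\,\min\!\bigl(p_1,\; M/(2p_1)\bigr).
\]
I would then maximize the right-hand side over $p_1 \in (0,\sqrt{M}]$. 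On the subinterval $(0,\sqrt{M/2}]$ the minimum equals $p_1$, giving the bound $kp_1 \leq k\sqrt{M/2}$. On $(\sqrt{M/2},\sqrt{M}]$ the minimum equals $M/(2p_1)$, giving the convex function $g(p_1) := p_1 + (k-1)M/(2p_1)$, whose unique global minimizer lies at $p_1^{\star} = \sqrt{(k-1)M/2}$.

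The key observation is that $p_1^{\star} \geq \sqrt{M}$ precisely when $k \geq 3$. Hence for $k \geq 3$, $g$ is monotonically decreasing on the entire interval $(\sqrt{M/2},\sqrt{M}]$, so its supremum there equals $g(\sqrt{M/2}) = k\sqrt{M/2}$, matching the bound from the first subinterval; combining, $1 \leq k\sqrt{M/2}$, which forces $M \geq 2/k^2$. For $k=2$ instead, $g$ is increasing on $(\sqrt{M/2},\sqrt{M}]$, so its supremum is $g(\sqrt{M}) = (3/2)\sqrt{M}$, which dominates the bound $2\sqrt{M/2} = \sqrt{2M}$ from the first subinterval; hence $1 \leq (3/2)\sqrt{M}$, forcing $M \geq 4/9$. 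No serious obstacle is expected here: everything reduces to two one-variable calculus computations, and inspecting where the chain of inequalities is saturated recovers exactly the extremal $(p_i)$ claimed in the statement (uniform when $k \geq 3$; $(2/3,1/3)$ when $k=2$).
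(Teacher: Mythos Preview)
Your argument is correct. The paper's proof takes a somewhat different, more direct route: after ordering $p_1\ge\cdots\ge p_k$, it simply splits into the two cases $p_1=p_2$ (where the maximum is $2p_1^2\ge 2/k^2$ since $p_1\ge 1/k$) and $p_1>p_2$ (where the maximum is $\max\{p_1^2,2p_1p_2\}$, minimized when $p_1=2p_2$ and all remaining $p_i$ equal $p_2$, giving $4/(k+1)^2$), and then compares $2/k^2$ with $4/(k+1)^2$. Your approach is instead a ``dual'' one: you fix the value $M$ of the inner maximum, translate the constraints $p_1^2\le M$ and $2p_1p_2\le M$ into an upper bound on $\sum_i p_i$ as a function of $p_1$, and then optimize that one-variable bound. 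The paper's argument is shorter and identifies the extremal distributions by construction, whereas yours is a bit more mechanical but has the advantage of being a systematic recipe that would adapt readily if the inner objective were altered; both reach the same thresholds, and tracing equality in your chain of inequalities indeed recovers the extremizers as you note.
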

\begin{proof} Without loss of generality, we may assume that $p_1\geq p_2\geq\dots\geq p_k$. 
If $p_1=p_2$, the maximum of $\{p_i^2,2p_i,p_j\}$ is clearly $2p_1p_2= 2p_1^2$ which is at least $\frac{2}{k^2}$ since $p_1\geq \frac{1}{k}$. On the other hand, if $p_1>p_2$, then the maximum is $\max\{p_1^2,2p_1p_2\}$. This quantity is minimized when $p_1 = 2p_2$, in which case it is equal to $4p_2^2$. 
Moreover, $4p_2^2$ is minimized when $p_i = p_2$ for all $i\geq 2$, so that $p_1/2=p_2=\dots=p_k=1/(k+1)$. Thus, when $p_1 > p_2$, the minmax is $4/(k+1)^2$, which is only smaller than $2/k^2$ when $k=2$.
\end{proof}

Since the values of $p_i$ that we use in the random partitioning are those coming from \cref{lemma:parameters}, it follows from the paragraph preceding the statement of the lemma that the expected number of crossings of $e = (u,v)$, 
conditioned on the values of $\xi_u$ and $\xi_v$, is at most $\gamma_k L$.  
Let $g(e)$ denote the random variable recording the number of crossings of $e$ 
at the end of our procedure, and for $k_1,k_2\in[k]$, let $A_{e,k_1,k_2}$ denote the event that $g(e) > (\gamma_k + \varepsilon)L$ conditioned on $\xi_u = k_1$ and $\xi_v = k_2$. We now use McDiarmid's inequality to show that $A_{e,k_1,k_2}$ occurs with sufficiently low probability.

Let $e_1,\dots,e_s$ be an enumeration of the edges which cross $e=(u,v)$ in the drawing $D$ of $G$ and let $v_1,\dots,v_t$ denote an enumeration of their endpoints \emph{other than $u$ and $v$}. 
For each $i \in [t]$, 
let $c_i$ denote the number of edges incident to $v_i$ that cross $e$ in the drawing $D$ of $G$. Then, conditioned on the values of $\xi_u$ and $\xi_v$, $g(e)$ 
depends only on the independent random variables variables $\xi_{v_1},\dots, \xi_{v_t}$, and moreover, for all $i\in[t]$, changing the value of $\xi_{v_i}$ can change the value of $g(e)$ by at most $c_i$. Therefore, by McDiarmid's inequality, we get that   
\begin{equation}
\label{eqn:mcd}
\Pr(A_{e,k_1,k_2})\leq \exp\left(-\frac{2(\varepsilon L)^2}{c_1^2+\dots+c_t^2}\right) \leq \exp\left(-\frac{2\varepsilon^2 L}{\Delta}\right),
\end{equation} 
where in the second inequality, we have used that  
$\sum_i c_i^2 \leq \Delta \sum_i c_i \leq \Delta L$. We remark that the same argument does not work if we do not condition on the values of $\xi_u$ and $\xi_v$, since in this case $g(e)$ also depends on the random variables $\xi_u$ and $\xi_v$ which can influence the value of $g(e)$ by up to $L$. Now, from the definition of $A_{e,k_1,k_2}$ and \cref{eqn:mcd}, it immediately follows using the law of total probability that if we let $A_e$ be the event that $g(e) > (\gamma_k + \varepsilon)L$, then $\Pr(A_e) \leq \exp(-2\varepsilon^2 L /\Delta)$.  

We wish to use the local lemma to show that with positive probability, none of the events $A_e$ occurs. For this, we begin by observing that if two edges $e_1$ and $e_2$ are such that $e_1\cap e_2 = \varnothing$ and there is no triple $(v,e'_1,e'_2)$ for which $v\in e'_1 \cap e'_2$, $e'_1$ crosses $e_1$ and $e'_2$ crosses $e_2$, then the events $A_{e_1}$ and $A_{e_2}$ depend on disjoint collections of random variables.  
It follows that each event $A_e$ can depend on at most $2L^2\Delta$+ $2\Delta$ other $A_{e'}$'s; there are at most $2\Delta$ edges incident to $e$, and there are at most $2L^2\Delta$ triples $(v,e'_1,e'_2)$ as above since there are at most $L$ choices for $e_1'$, $2$ choices for $v\in e_1'$, at most $\Delta$ choices for $e_2' \ni v$ and at most $L$ choices for $e_2$ crossing $e_2'$.  
In particular, we can apply the local lemma provided $3 \Pr(A_e) \left(2L^2\Delta + 2\Delta +1\right) <1$. 

Note that until now, we have not used our assumption on the structure of $G$. This assumption will be used in the current paragraph to show that $3 \Pr(A_e) \left(2L^2\Delta + 2\Delta +1\right) <1$. First, since $L \geq 2m^{2}/29n^{2}$ by \cref{lcr vs avg deg}, and since $\Delta < 2\alpha m/n$ by assumption, it follows that $\Delta < \alpha\sqrt{58L}$. Therefore, for $L \geq 1$,
\begin{align*}
3 \Pr(A_e) \left(2L^2\Delta + 2\Delta +1\right) &\leq 15 L^2\Delta\Pr(A_e)\\
&\leq 120\alpha L^{5/2}\exp(-2\varepsilon^2 L /\Delta)\\
&\leq 120\alpha L^{5/2}\exp\left(-\varepsilon^{2} \sqrt{L}/4\alpha\right),
\end{align*}
and note that the right hand side is less than $1$ if $L \geq \beta$ with $\beta$ as in the statement of the theorem.

\begin{remark}
\label{rmk:prob-bound}
The above proof shows that, in fact, $\Pr[\cap_e\overline{A_e}] \geq (1-1/(2L^2\Delta+2\Delta+1))^{m}$. We will use this in the proof of \cref{combine cr and lcr}.
\end{remark}


\subsection{Lower bound} 
It is well-known (see \cite{schaefer2018crossing}) that 
$$\frac{1}{30}\leq \lim_{n\to \infty} \frac{\LCR(K_n)}{{n\choose 2}}\leq \frac{2}{9}.$$
On the other hand, when we decompose $K_n$ into $k$ subgraphs, there must exist one with at least $\frac{1}{k}{n\choose 2}$ edges. Without loss of generality, we may assume that $G_1$ is such a subgraph. Then,
$$\LCR_k(K_n) \geq \LCR(G_1) \geq \frac{2e(G_1)^{2}}{29n^2} \geq \frac{\frac{2}{k^2}{n\choose 2}^{2}}{29n^{2}},$$
where the second inequality follows from \cref{lcr vs avg deg}. 
Hence 
$$\liminf_{n\to \infty}\frac{\LCR_k(K_n)}{\LCR(K_n)}\geq \liminf_{n\to \infty}\frac{\frac{2}{k^{2}} {n\choose 2}^{2}}{29 n^2}\left(\frac{2}{9}{n\choose 2}\right)^{-1}\geq \frac{9}{58k^2}.$$

\section{Proof of \cref{combine cr and lcr}}
Let $G$ have $n$ vertices, $m$ edges, maximum degree $\Delta$, crossing number $\CR(G)=C$, and local crossing number $\LCR(G)=L$. Consider the random vertex partitioning and associated $k$-planar decomposition of $G$ into $G_1,\dots,G_k$ as in the proof of \cref{thm 4/9}, and consider drawings of $G_1,\dots,G_k$ in the plane as before. For each $i \in [k]$ and for this choice of drawing of $G_i$, we will use $C_i$ to denote the total number of crossings, and $L_i$ to denote the maximum number of times any edge is crossed.  
Let $E_1$ denote the event that $\sum_{i=1}^k C_i\leq \left(\frac{2}{k^2}-\frac{1}{k^3}+\varepsilon\right) \CR(G)$, and let $E_2$ denote the event that $\max_{i\in [k]} L_i\leq \left(\frac{2}{k^2}+\varepsilon\right)\LCR(G)$. In order to complete the proof, it suffices to show that $\Pr(E_1\cap E_2)>0$, or equivalently, that $\Pr(\overline{E_1}\cup \overline{E_2}) < 1$. We will do so by showing that $\Pr(\overline{E_1}) < \Pr(E_2)$. 

We begin by upper bounding $\Pr(\overline{E_1})$. Let $X:=C_1+\dots+C_k$. The analysis in \cite{PACH-k-planar} shows that $\mathbb{E}[X]=\left(\frac{2}{k^2}-\frac{1}{k^3}\right) \CR(G)$, where the expectation is taken with respect to the choice of the vertex partitioning $\{\xi_{v}\}_{v\in V(G)}$. Observe that changing the value of any $\xi_{v}$ can change $X$ by at most the total number of crossings in the original drawing of $G$ that $v$ is involved in; by assumption, this is at most $\Delta L$, since $v$ is incident to at most $\Delta$ edges, each of which is crossed at most $L$ times. Therefore, by McDiarmid's inequality,
$$\Pr(\overline{E_1})=\Pr(X>\mathbb{E}[X]+\varepsilon C)\leq \exp{\left(-\frac{2(\varepsilon C)^2}{n(L\Delta)^2}\right)}.$$

Moreover, by \cref{rmk:prob-bound} 
$$\Pr(E_2) \geq \left(1-\frac{1}{2L^2\Delta+2\Delta+1}\right)^m\geq \left(1-\frac{1}{2L^2\Delta}\right)^{m}\geq \exp{\left(-\frac{m}{L^2\Delta}\right)},$$
where the final inequality holds since $L \geq 1000$ by assumption. Therefore, in order to show that $\Pr(\overline{E_1}) < \Pr(E_2)$, we simply need to show that 
$$\frac{m}{L^{2}\Delta} < \frac{2(\varepsilon C)^{2}}{n (L\Delta)^2},$$
which is equivalent to $C^{2} > mn\Delta/2\varepsilon^{2}$.
By the crossing number inequality $C \geq m^{3}/29n^{2}$, this is implied by $m^{6}/29n^{4} > mn\Delta / 2\varepsilon^{2}$, which follows immediately from our assumption that $\alpha <\varepsilon^{2}(m/n)^{4}/1000$, and hence $\Delta < \varepsilon^{2} (m/n)^{5}/500$.


\section*{Acknowledgements}
This material is based upon a question raised at the Mathematics Research Communities workshop ``Beyond Planarity: Crossing Numbers of Graphs'', organized by the American Mathematical Society, with the 
support of the National Science Foundation under Grant Number DMS 1641020, which was attended by the first three authors. The first three authors would like to thank  Laszlo Sz\'ekely, Libby Taylor and Zhiyu Wang
for helpful conversations.


\bibliographystyle{plain}
\bibliography{local-crossing}
\end{document}